\newcommand{\E}{\ensuremath{\mathbb{E}}}
\newcommand{\R}{\ensuremath{\mathbb{R}}}
\newcommand{\Var}{\operatorname{Var}}
\newcommand{\tint}{\ensuremath{\textstyle{\int}}}
\newtheorem{thrm}{Theorem}
\newtheorem{lemm}[thrm]{Lemma}
\theoremstyle{definition}
\newtheorem{deff}[thrm]{Definition}
\theoremstyle{remark}
\newtheorem{rmrk}[thrm]{Remark}
\newtheorem{exmp}[thrm]{Example}
\date{August 25, 2016}
\begin{document}

\title{\bf Consistent Adaptive Multiple Importance Sampling and Controlled Diffusions}
\author{
	Sep Thijssen
	\thanks{
		This work was funded by D-CIS Lab / Thales Research \& Technology NL, and supported by the European Community Seventh Framework Programme (FP7/2007-2013) under grant agreement 270327 (CompLACS). The authors would like to express their appreciation to Prof. dr. E.A. Cator for fruitful discussion.
		}
	\hspace{.2cm}\\
	Department of Neurophysics, Donders Institute for Neuroscience,\\
	Radboud University Nijmegen
	and \\
	H.J. Kappen \\
	Department of Neurophysics, Donders Institute for Neuroscience,\\
	Radboud University Nijmegen
}
\maketitle

\newpage

\begin{abstract}
Recent progress has been made with Adaptive Multiple Importance Sampling (AMIS) methods that show improvement in effective sample size. However, consistency for the AMIS estimator has only been established in very restricted cases. Furthermore, the high computational complexity of the re-weighting in AMIS (called balance heuristic) makes it expensive for applications involving diffusion processes. In this work we consider sequential and adaptive importance sampling that is particularly suitable for diffusion processes. We propose a new discarding-re-weighting scheme that is of lower computational complexity, and we prove that the resulting AMIS is consistent. Using numerical experiments, we demonstrate that discarding-re-weighting performs very similar to the balance heuristic, but at a fraction of the computational cost.
\end{abstract}

\noindent%
{\it Keywords:} Sequential Monte Carlo, Kullback-Leibler Control, Path Integral Control.

\section{Introduction}
\label{sect:intro}

Monte Carlo (MC) integration is a broadly applied method to numerically compute integrals that might be difficult to evaluate otherwise, due to, for example, high dimensions. The main shortcoming of MC integration is perhaps that the estimator can have a high variance, which has led to techniques such as Importance Sampling (IS). The idea behind IS is reducing the variance of an estimator by drawing samples from a chosen proposal distribution that puts more emphasis on ``important'' regions. This will in general introduce a bias, which has to be corrected with an importance weight. 

There are, generally speaking, two different motivations for implementing IS. 

First, one might be interested in an expected value over a distribution $Q$ from which it is impossible to draw samples (efficiently). In this case a proposal distribution can be constructed in order to generate samples \cite{cappe, marin, cornuet}. When the density $q = dQ/dx$ is only known up to a factor, the normalization constant needs to be estimated as well. For this reason, it is common to choose a proposal distribution close to $Q$.

The second motivation to use IS, is whenever sampling from $Q$ is possible but very inefficient for the purpose of MC integration. This is, for example, typically the case with conditioned diffusions \cite{doob} or stochastic control problems \cite{ruiz}, which have important applications of IS in, for example, robotics, \cite{theodorou}.
Our motivation to use IS is of the second kind.

In cases where it is difficult to choose a single proposal distribution that covers all the important regions, one can resort to a mixture of proposal distributions, This technique is known as Multiple Importance Sampling (MIS) \cite{owen}. An important problem in MIS is the choice or construction of good proposal distributions. Roughly, there are two approaches: either the proposals are carefully chosen in advance of the sampling procedure \cite{veach}, or the proposals are optimized during the sampling procedure \cite{oh, cornuet}. The advantage of the former is that it is clearly consistent, because, in contrast to the latter, all samples are independent. The advantage of the latter is that the optimization scheme might yield better sampling efficiency. 

A particular instance of MIS with optimization of the proposals during sampling is the so-called Adaptive Multiple Importance Sampling (AMIS) algorithm \cite{cornuet}. In AMIS the samples and their associated importance weights are combined according to the balance heuristic. Although the balance heuristic is optimal in the sense of variance reduction when the number of samples goes to infinity \cite{veach}, it also introduces a complicated dependence between the samples from the various proposals. As a consequence, consistency for AMIS is a non trivial proposition, which only recently been established, and only in restricted cases \cite{marin}. 

An aspect of AMIS, or more generally of MIS, that has not been addressed by the literature, is that of the additional computational overhead that is caused by re-weighting of the samples. This overhead is proportional to the cost of computing a likelihood ratio. In some scenarios, for example when sampling requires real world interaction, this cost might be negligible. However, in MC sampling this cost will be roughly proportional to the cost of drawing a sample. 
This becomes an issue when the re-weighting scheme has a higher computational complexity than the drawing process, because in that case the algorithm will eventually spend more time on re-weighting than on drawing samples. Critically, this is the case when re-weighting uses the balance heuristic, which has a complexity of $\mathcal {O}(K^2M)$, where $K$ is the number proposal distributions, and $M$ the number of samples per proposal. Note that this is larger than the complexity $\mathcal {O}(KM)$ of drawing all the $MK$ samples, particularly with many proposal distributions. 

In this work we propose a new re-weighting scheme, called discarding-re-weighting, and address the issues described above. 
In particular, discarding-re-weighting will have a complexity of $\mathcal {O}(KM)$. We will provide a consistency proof of the corresponding discarding-AMIS, without any restrictions, aside from the usual, on the proposals distributions. 
Furthermore, we will show in a numerical example that our proposed re-weighting scheme is well suited for sampling over diffusion processes, by comparing it with the balance heuristic.

\paragraph{Outline.} 
The remainder of this work is structured as follows. In Section~\ref{sect:gen-amis} we review the generic AMIS method. Sections~\ref{sect:flat}--\ref{sect:wiener} consider the re-weighting scheme, where in Section~\ref{sect:flat} we treat consistency, in Section~\ref{sect:discard} introduce discarding-re-weighting, which we apply in Section~\ref{sect:wiener} to sampling over diffusion processes. In Section~\ref{sect:control} we propose a specific proposal update in the context of diffusion processes. This update is used in Section~\ref{sect:exmp} to compare our new re-weighting scheme with the balance heuristic.

\section{The generic AMIS}\label{sect:gen-amis}

In this section we briefly review IS, MIS and AMIS for MC integration. In particular we shall give a description of a generic AMIS. 

Let $(\Omega, \mathcal{F}, Q)$ be a probability space with an $E$-valued random variable $X$, and an $\mathbb{R}$-valued function $h(X)$. The goal is to calculate 
\begin{align*}
\psi = \E_Q[h(X)],
\end{align*}
using a MC estimate.
In particular we will be interested in variance reduction that can be achieved via importance sampling. Let $P$ be another probability measure on $(\Omega, \mathcal{F})$, and let $dQ/dP$ denote a density of $Q$ relative to $P$, then
\begin{align}
\hat\psi^P = \frac1N\sum_{n = 1}^N h(X_n)\tfrac{dQ}{dP}(X_n),\qquad \text{ where } X_n\sim P, \label{eq:is}
\end{align}
is an unbiased estimator for $\psi$, provided that for all events $A\in \mathcal{F}$
\begin{align}
P(A) = 0 \quad \Longrightarrow \quad h = 0 \text{ on } A, Q\text{-almost surely.}\label{eq:p<<qh}
\end{align}
Often condition~(\ref{eq:p<<qh}) is replaced by the stronger assumption of absolute continuity, $Q\ll P$, so that the importance weight $dQ/dP$ exists everywhere. Regarding importance sampling, however, we only require that $dQ/dP$ exists whenever $h \neq 0$.

Instead of using one proposal, $P$, the MC estimate can also be based on a mixture of proposals. For $k = 1, \ldots, K$ let $P_k$ be probability measures on $(\Omega, \mathcal{F})$ all satisfying Condition~(\ref{eq:p<<qh}). The Multiple IS (MIS) estimator is defined as
\begin{align}\label{eq:mis}
\hat\psi^{\operatorname{MIS}} = \frac1N\sum_{k = 1}^{K} \sum_{n = 1}^{N_k} h(X_n^k) \tfrac{dQ}{dP_k}(X_n^k) w_k(X_n^k),	\\
X_n^k\sim P_k,\qquad\textrm{for }n = 1, \ldots, N_k,\nonumber
\end{align}
where $N = \sum_{k = 1}^K N_k$ is the total number of samples. 
If the $X_n^k$ are independent, and the re-weighting functions $w_k(x)$ satisfy
\begin{align*}
h(x) \neq 0 &\quad\Rightarrow\quad \frac1N \sum_{k = 1}^K N_k w_k(x) = 1,
\end{align*}
then $\hat\psi^{\operatorname{MIS}}$ is an unbiased estimate \cite{veach}. Remarkably there are many choices for $w_k$. A particularly simple choice would be $w_k = 1$, which will henceforth be referred to as flat re-weighting. Another scheme that is of interest is the so called balance-heuristic, which is also called deterministic multiple mixture. It is defined by
\begin{align*}
w_k(x) = \frac{1}{\frac1N \sum_{l = 1}^K N_k \tfrac{dP_l}{dP_k}(x)}.
\end{align*}
The advantage of balance heuristic over flat re-weighting is that the former results in lower variance mixed estimates when combining, for example, a (good) proposal that gives low variance estimates with a (bad) proposal that gives high variance estimates. The reason is, roughly speaking, that the reciprocal of the variance of the balance mix is the reciprocal of the harmonic mean of the variance of the individual proposals, while for the flat re-weighted mix this is the standard arithmetic mean. For a study on the relative merits of various related re-weighting schemes for MIS see \cite{veach, owen}. 

In order to improve the efficiency of a MIS algorithm, one can adapt the proposals sequentially. This idea was first mentioned in \cite{oh} with the name Adaptive Importance Sampling (AIS), and more recently in \cite{cornuet} with the name Adaptive Multiple Importance Sampling (AMIS). Both of these methods adapt the proposals at iteration $k$ by adapting a parameter that is estimated using all samples that are draw up to iteration $k$. The two methods differ in the re-weighting: AMIS uses the balance-heuristic, while AIS uses flat re-weighting. If we instead consider the idea of adaptive sequential updates without specifying the form of the proposal or the re-weighting scheme we obtain a generic AMIS \cite{elvira, martino}:
\begin{algorithm}
\caption{generic AMIS}\label{algo:gamis}
\begin{itemize}
\item
At iteration $k = 1, \ldots, K $ do
	\begin{description}
	\item[Adaptation.] 
		Construct a measure $P_{k}$, possibly depending on $X_n^l$ and $w_n^l$ with $1\leq l < k$, $1\leq n\leq N_l$\label{algo:amisupdate}
	\item[Generation.] 
		For $n = 1, \ldots, N_k$ draw $X_n^k\sim P_{k}$
	\item[Re-weighting.] 
		For $n = 1, \ldots, N_k$ construct $w_n^k$.\\
		For $l = 1, \ldots, k- 1$ and $n = 1, \ldots, N_l$, update $w_n^l$.
	\item[Output.] Return $\hat\psi_k = \frac1{\sum_{l= 1}^kN_l}\sum_{l = 1}^k\sum_{n = 1}^{N_l} \tfrac{dQ}{dP_l}(X_n^l)h(X_n^l)w_n^l$
	\end{description}
\end{itemize}
\end{algorithm}

\index{AMIS} 
The computational complexity of the generic AMIS will depend on the specifics of both the adaptation and re-weighting step. For example, AMIS with $K$ iterations that uses the balance-heuristic has a complexity of $\mathcal{O}(MK^2)$, when $N_k = M$ samples are used at each iteration $k$, while for flat re-weighting this is only $\mathcal{O}(MK)$.

The unbiasedness and consistency from MIS does in general not carry over to the generic AMIS. The adaptation step introduces dependencies between samples from different iterations. Furthermore, the re-weighting might introduce extra correlations. Consistency has been established for a specific AMIS in \cite{marin} under the assumption that the adaptation is only based on the last $N_k$ samples and that $N_k$ grows at least as fast as $k$. The downside of this method is that the proposal cannot be updated very frequently, and only while using a subset of all the available samples. In the next section we will establish consistency of AMIS with flat re-weighting (flat-AMIS) for generic proposal adaptations without any such restrictions. 

\section{Consistency of flat-AMIS}\label{sect:flat}

In this section we will prove that flat-AMIS is consistent. Consistency can only be established when we make some assumptions on the proposals (see Example~\ref{exmp:counter}), but these assumptions will be quite general and they often do not pose any restrictions in practice.

Let $\mathcal{P}$ be the class of proposal distributions. Let $\|X\|_r = \left(\E\left[|X|^r\right]\right)^{1/r}$ denote the $L^r$-norm. We will require that there are constants $r>1$ and $C>0$, such that for all $P\in\mathcal{P}$
\begin{align} \label{eq:bound}
\left\|h(X)\tfrac{dQ}{dP}\right\|_r < C, \qquad X\sim P.
\end{align}
\begin{thrm}[Flat-AMIS is consistent] \label{thrm:consistent}
Let $\hat\psi_k$ be defined as in the output step of Algorithm~\ref{algo:gamis} using flat re-weighting, i.e.~with $w_n^k = 1$. Suppose that both Eq.~(\ref{eq:p<<qh}) and (\ref{eq:bound}) are satisfied, then
\begin{align}
\hat\psi_k\to \psi\qquad \text{a.s.} 
\end{align}
when $\sum_{l\leq k} N_l\to\infty$. 
\end{thrm}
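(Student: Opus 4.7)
The plan is to rewrite $\hat\psi_k - \psi$ as a normalized partial sum of a martingale difference array and invoke a strong law for martingales with uniformly bounded $L^p$ moments. Let $Z_n^l := h(X_n^l)\frac{dQ}{dP_l}(X_n^l)$, $Y_n^l := Z_n^l - \psi$, and $n_k := \sum_{l\le k} N_l$, so that $\hat\psi_k - \psi = n_k^{-1}\sum_{l=1}^k\sum_{n=1}^{N_l} Y_n^l$. I would introduce the filtration $\mathcal{F}_{l,n} = \sigma(X_{n'}^{l'} : l' < l \text{ or } (l'=l \text{ and } n'\le n))$. By the adaptation step of Algorithm~\ref{algo:gamis}, $P_l$ is $\mathcal{F}_{l-1,N_{l-1}}$-measurable, and the conditional distribution of $X_n^l$ given $\mathcal{F}_{l,n-1}$ is $P_l$. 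Since $P_l\in\mathcal{P}$ satisfies~(\ref{eq:p<<qh}), the unbiasedness identity following~(\ref{eq:is}) applied pointwise in $P_l$ gives $\mathbb{E}[Z_n^l \mid \mathcal{F}_{l,n-1}] = \psi$ almost surely. Flattening the double index lexicographically, $(Y_j)_{j\ge 1}$ is then a martingale difference sequence.

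From~(\ref{eq:bound}) and the tower property, $\|Z_n^l\|_r \le C$ uniformly in $(l,n)$; Jensen's inequality applied to $\psi = \mathbb{E}[Z_n^l]$ gives $|\psi|\le C$; hence $\sup_j\|Y_j\|_r \le 2C$. Setting $p := \min(r,2)\in(1,2]$, I obtain $\sup_j\mathbb{E}[|Y_j|^p]<\infty$, and since $p>1$ the series $\sum_{j\ge 1}j^{-p}\mathbb{E}[|Y_j|^p]$ is finite. Chow's strong law for martingale differences (which combines a Burkholder-type $L^p$ estimate for the martingale $\sum_j Y_j/j$ with Kronecker's lemma) then gives $n^{-1}\sum_{j=1}^n Y_j \to 0$ almost surely. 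Evaluating along the subsequence $n = n_k \to \infty$, which is permitted because a.s.\ convergence is preserved along any deterministic (or indeed any) subsequence, yields $\hat\psi_k\to\psi$ almost surely.

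The main obstacle is the careful bookkeeping of the adaptive filtration: one must verify that conditioning on $\mathcal{F}_{l,n-1}$ simultaneously freezes the adaptively chosen proposal $P_l$ and preserves the conditional law $X_n^l\sim P_l$, so that the importance sampling identity can be applied inside the conditional expectation despite the dependence of $P_l$ on earlier samples. After that the argument is routine: assumption~(\ref{eq:bound}) delivers the uniform $L^p$ moment bound, the restriction $r>1$ is exactly what produces the summable series $\sum j^{-p}$ driving Chow's theorem, and the presumably forthcoming Example~\ref{exmp:counter} shows that some such moment condition cannot be dispensed with.
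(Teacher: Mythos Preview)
Your proposal is correct and follows essentially the same route as the paper: flatten the double index, observe that the centered importance-weighted terms form a martingale difference sequence with uniformly bounded $L^r$ moments (the paper uses Minkowski to get $\|Y_i-\psi\|_r\le C+|\psi|$, you use $|\psi|\le C$ and the triangle bound), and invoke a martingale strong law. The only cosmetic difference is the specific SLLN cited---you appeal to Chow's theorem via the summable series $\sum j^{-p}\E[|Y_j|^p]$ with $p=\min(r,2)$, while the paper quotes a mixingale SLLN (Hansen) that directly covers uniformly $L^r$-bounded martingale difference sequences for any $r>1$.
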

\begin{proof}
Let $i(n, k) = n + \sum_{l<k}N_l$ denote the total number of samples so far, and define $Y_{i} = h(X_n^k)\tfrac{dQ}{dP_k}(X_n^k)$. Then by Eq.~(\ref{eq:p<<qh}) we obtain that $Y_i$ is an unbiased estimator of $\psi$, when conditioning on all samples up to $i$, i.e.~$\E[Y_i\mid X_j, j<i] = \psi$. Therefore $\{Y_i - \psi\}_{i>0}$ is a martingale difference sequence (see Definition~\ref{deff:mds}). Furthermore, by the Minkowski inequality, we get that $\|Y_i - \psi\|_r \leq \|Y_i \|_r + \|\psi\|_r < C + \psi$, where we used Eq.~(\ref{eq:bound}) for the last inequality. We conclude that $Y_i - \psi$ is bounded uniformly in the $L^r$-norm. By Theorem~\ref{thrm:slln}, we obtain $I^{-1}\sum_{i = 1}^I Y_i\to \psi$ almost surely as $I\to\infty$. Now note that $\hat\psi_k = I^{-1}\sum_{i=1}^I Y_i$ when $I = I(N_k, k) = \sum_{l\leq k}N_l$. 
\end{proof}

Note that in the proof above we did not make any assumptions about the relative size between $k$ and $N_k$. In particular the result is valid in the two extreme cases when $N_k = 1$ for all $k$ and $K\to\infty$, or when $K$ is finite and $N_k\to\infty$ for any $k$.

\begin{exmp}\label{exmp:counter} 
Here we show that the condition of Eq.~(\ref{eq:bound}) in Theorem~\ref{thrm:consistent} is not redundant, by giving a sequence of proposals that will not yield a consistent estimate. 
Specifically, we consider the sampling problem that is given by
\begin{align*}
q(x) 
	&= \frac{dQ}{dx} =\frac1{\sqrt{2\pi}}\exp\left(-\tfrac12 x^2\right)\\
h(x) 
	&= \exp\left(-\tfrac12 x^2\right).
\end{align*}
We will consider the class $\mathcal{P} = \{P^u\colon u\in\mathbb{R}\}$ of proposal distributions, where
\begin{align*}
p^u(x) 
	&= \frac{dP^u}{dx} = \frac1{\sqrt{2\pi}}\exp\left(-\tfrac12 (x - u)^2\right).
\end{align*}
In Figure~\ref{fig:notui} we give a graphical representation of the importance sampling situation, with parameters $u = 1, 2, 3, 7$. Here you can see that the value of $h(x)q(x)/p_u(x)$ gets smaller in regions where $p^u(x)$ is large, when $u$ increases (compare the dashed and the solid line). 
Indeed, it is not difficult to prove that for all $\gamma>0$ we have $\lim_{u\to\infty}P^u(|h(X)\tfrac{dQ}{dP^u}(X)|>\gamma) = 0$. 
So if we take $u_k = k$, and $N_k =1$ and consider the flat-AMIS estimate $\hat\psi_K = \sum_{k = 1}^K h(X^k)\tfrac{dQ}{dP^k}(X^k)$, where $X^k\sim P^k = P^{u_k}$, then also
$\lim_{K\to\infty}\operatorname{Pr}(\hat\psi_K>\gamma) = 0$ for all $\gamma$. In words: the estimator $\hat\psi_K$ goes to zero in probability when $K\to\infty$. 
In contrast, for all $u$, we have $\psi = \E_{P^u}[h(X)\tfrac{dQ}{dP^u}] = 1/\sqrt{2}$ (area under the dotted line in Figure~\ref{fig:notui}). 
So, per definition, $\hat\psi_K$ is not a consistent estimator of $\psi$. 

\end{exmp}

\begin{figure}
\begin{center}
\includegraphics[width=10cm]{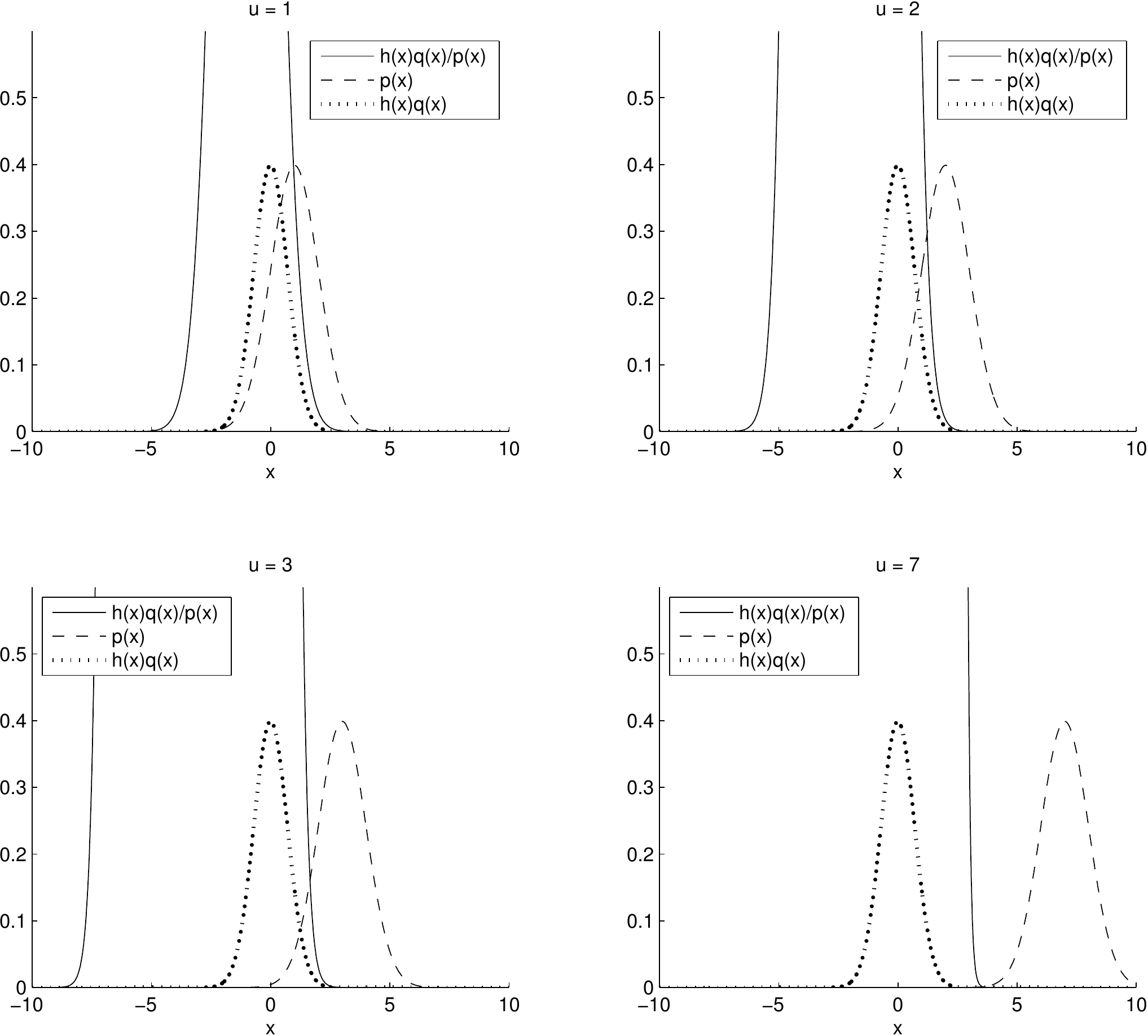}
\end{center}
\caption{
	For $u = 1, 2, 3, 7$ we plot $h(x)q(x)/p^u(x)$ (solid line), $p^u(x)$ (dashed line) and the product $h(x)q(x)$ (dotted line). Although the overlap between $h(x)q(x)/p^u(x)$ and $p^u(x)$ becomes smaller for larger $u$, the product does not depend on $u$.
}
\label{fig:notui}
\end{figure}

Theorem~\ref{thrm:consistent} holds for the generic proposal adaptation step in Algorithm~\ref{algo:gamis}, and condition Eq.~(\ref{eq:bound}) is the weakest that we were able to find. As a consequence, Eq.~(\ref{eq:bound}) is rather abstract and it might be hard to verify in practice. Therefore it might be sensible to replace Eq.~(\ref{eq:bound}) with a stronger condition that is easier to verify. We will do this for diffusion processes in Section~\ref{sect:wiener}.

\section{AMIS with discarding}\label{sect:discard}

Flat-AMIS yields, in contrast to balance-AMIS, a provably consistent estimate, see Theorem~\ref{thrm:consistent}. Furthermore, the computational complexity of flat-AMIS, is $\mathcal{O}(MK)$, when $N_k = M$ for all $k$, which is optimal, while the complexity of balance-AMIS is $\mathcal{O}(MK^2)$. Nevertheless, balance-AMIS will outperform flat-AMIS in most practical applications. The reason is that in a flat re-weighting scheme, samples from a poor proposal typically dominate the computation, while this effect is averaged out by the balance-heuristic. In this section we will show that a simple modification of the flat re-weighting scheme results in an AMIS that is both consistent and computationally efficient. 

The issue with flat re-weighting can be understood in more detail as follows. For a good proposal $P_1$ the terms $h\frac{dQ}{dP_1}$ do not deviate much from $\psi$. For a bad proposal $P_2$ most terms $h\frac{dQ}{dP_2}$ are close to zero, while a few will be exceptionally large compared to $\psi$. These large terms obviously dominate the IS estimate with $P_2$, but when mixing $P_1$ and $P_2$, the large terms from $P_2$ will also dominate over the samples from $P_1$. As a result, the mixing estimate might be worse than the IS estimate from the $P_1$ samples alone. 

To improve upon flat re-weighting we therefore propose to simply ignore the samples from bad proposals. Since the idea of AMIS is that with each adaptation the proposal improves, one will expect that the variance decreases over time, and the quality of the samples improves. This brings us to the following algorithm which we will call discarding-AMIS, where we specifically choose the following re-weighting step.  

\begin{description}
\item[Discarding-re-weighting] (at iteration $k$)\\
	Determine a discarding time $t_k\in\{1, 2, \ldots, k - 1\}$. \\
	For $l = 1, \ldots, t_k$ and $n = 1, \ldots, N_l$, set $w_n^l = 0$.\\
	For $l = t_k + 1, \ldots, k$ and $n = 1, \ldots, N_l$, set $w_n^l = k/(k - t_k)$.
\end{description}
Note that with this re-weighting the output at iteration $k$ of discarding-AMIS is
\begin{align*}
\hat\psi_k = \frac1{\sum_{l = t_k + 1}^kN_l}\sum_{l = t_k + 1}^k\sum_{n = 1}^{N_l} h(X_n^l) \tfrac{dQ}{dP_l}(X_n^l).
\end{align*}

The discarding time $t_k$ as given above is generic. We will now discuss two specific implementations of $t_k$ that both have their merits.

The first choice is motivated by the consistency issue. 
\begin{rmrk}
Theorem~\ref{thrm:consistent} still holds whenever $t_k$ is chosen independently of the sampling process and when $\sum_{l = t_K + 1}^KN_l\to\infty$. For example, one can take $t_k = \lceil k/2\rceil$ so long as $k\to\infty$.
\end{rmrk}

Secondly, let us consider a discarding time that aims to re-cycle the samples as efficiently as possible. When we have a measure of performance, we can utilize it to dynamically choose a discarding time that leaves us with the samples that yield the highest performance. For example, at iteration $k$ we can calculate the Effective Sample Size (ESS, see Eq.~(\ref{eq:sample_ess})) for all possible discarding times, and then choose the one that maximizes ESS. Clearly this will introduce a new level of dependence so that Theorem~\ref{thrm:consistent} no longer holds, and consistency is not guaranteed.
The computational cost of checking the ESS for all discarding times at iteration $k$ is $\mathcal{O}(Mk)$. If we do this at each iteration $k = 1, \ldots, K$, we get a total complexity of $\mathcal{O}(MK^2)$, which is more than $\mathcal{O}(MK)$ for the computations of the weights of all samples over all iterations. The latter however, might have a much larger prefactor, so that in practice the cost for finding the best ESS is negligible. This is for example the case with diffusion processes. Alternatively, one could consider the ESS for a sparser set of possible discarding times, such as $t = 2^s$ for $s = 1, 2, \ldots, \log(K)$, which will yield a complexity of $\mathcal{O}(MK\log(K))$. 

In Section~\ref{sect:exmp} we illustrate the difference in efficiency between $t_k = \lceil k/2\rceil$ and ESS-optimized discarding.

\section{Consistent AMIS for diffusion processes}\label{sect:wiener}

In this section we apply AMIS in order to compute expected values over a diffusion process, i.e.~with respect to the Wiener measure. By adding a drift to a diffusion process, we obtain a change in measure, and hence proposals that can be used for AMIS. We will give an easy to verify condition, involving the drift, that ensures consistency of flat-AMIS. 

In case of Wiener noise, the target measure $Q$, will implicitly be given by an $d$-dimensional It\^o process of the form
\begin{align}\label{eq:sdeq}
dX_t = \mu_tdt + \sigma_tdW_t, 
\end{align}
with $(\mu_t)_{0\leq t \leq T}$ and $(\sigma_t)_{0\leq t \leq T}$ adapted processes of dimension $d$ and $d\times m$ respectively, and $W_t$ an $m$-dimensional Brownian motion. The function $h$ in $\E_Q[h(X)]$ can be any function of the entire path: $h(X) = h\left((X_t)_{0\leq t\leq T}\right)$. 

If we have an adapted $m$-dimensional process $(u_t)_{0\leq t \leq T}$, we can implement IS with the proposal $P^u$ that is implicitly given by
\begin{align}\label{eq:sdep}
dX_t = \mu_tdt + \sigma_t\left(u_tdt + dW_t\right).
\end{align}
Often the adapted processes are given as feedback functions: $u_t = u(t, X_t)$, $\mu_t=\mu(t, X_t)$, $\sigma_t=\sigma(t, X_t)$. Instead of an explicit formula for the densities densities $dP^u/dx$, $dQ/dx$ with respect to a reference (e.g.~Lebesgue) measure $dx$, we only have access to stochastic differential equations such as Eq.~(\ref{eq:sdep}). On the upside, we will be able to generate (approximate) samples, for example by using the Euler-Maruyama method, \cite{kloeden}. So the goal in this scenario is not to generate samples close to the target $Q$; we can already do that by choosing $u = 0$. Instead, the aim of IS, or more generally, of AMIS, in this context, is to reduce the variance in the MC estimate of $\E_Q[h(X)] = \E_{P^u}[h(X)dQ/dP^u]$. In case of Wiener noise we are able to compute the importance weight $dQ/dP^u$, which, by the Girsanov Theorem \cite{karatzas, oksendal}, is given by:
\begin{align}\label{eq:rn}
\frac{dQ}{dP^u}
= \exp\left(-\int_0^T u_t^\top dW_t - \frac12\int_0^T u_t^\top u_tdt\right), 
\end{align}
where we have used $\top$ to denote the transpose. Note that since this equation is exact, we do not have to worry about normalization. 
\\ 

Next, we will investigate consistency of flat-AMIS in case of Wiener noise. Let $\mathcal{U}$ be a class of of adapted processes $(u_t)_{0\leq t\leq T}$ and let $\mathcal{P} = \{P^u\mid u\in \mathcal{U}\}$ be the corresponding class of proposal measures. We will replace the abstract conditions Eq.~(\ref{eq:p<<qh},~\ref{eq:bound}), that appear in Theorem~\ref{thrm:consistent}, by some assumptions that, although stronger, are easier to verify in practice. 
\begin{thrm}\label{thrm:consistent_u} Let $\mathcal{U}$ be a class of adapted processes. Suppose that 
\begin{enumerate}
\item $\mathcal{U}$ is uniformly bounded in the $L^\infty$ norm.\label{cond:ubound}
\item There is an $r>1$ such that $h\in L^r(Q)$.\label{cond:hbound}
\end{enumerate}
Then flat-AMIS with proposals from the class $\mathcal{P} = \{P^u\mid u\in \mathcal{U}\}$ is consistent. 
\end{thrm}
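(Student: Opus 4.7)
The plan is to invoke Theorem~\ref{thrm:consistent} by verifying its two abstract hypotheses (\ref{eq:p<<qh}) and (\ref{eq:bound}) for the family $\mathcal{P} = \{P^u : u\in\mathcal{U}\}$. Write $M := \sup_{u\in\mathcal{U}}\|u\|_\infty$, which is finite by (\ref{cond:ubound}). Condition (\ref{eq:p<<qh}) comes essentially for free: since $u$ is bounded, Novikov's condition is trivially satisfied, so by Girsanov the density $Z_u := dQ/dP^u$ of (\ref{eq:rn}) is $P^u$-a.s.\ strictly positive. Hence $Q\sim P^u$, and no $Q$-null set can violate (\ref{eq:p<<qh}), regardless of $h$.

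To check (\ref{eq:bound}), pick any $r'\in(1,r)$ with $r$ from (\ref{cond:hbound}). With $X\sim P^u$, the change-of-measure identity gives
\begin{align*}
\|h(X) Z_u\|_{r'}^{r'} = \E_{P^u}\bigl[|h|^{r'} Z_u^{r'}\bigr] = \E_Q\bigl[|h|^{r'} Z_u^{r'-1}\bigr],
\end{align*}
and Hölder's inequality under $Q$ with conjugate pair $(r/r',\, r/(r-r'))$ bounds this by
\begin{align*}
\|h\|_{L^r(Q)}^{r'} \cdot \bigl(\E_Q[Z_u^{\alpha}]\bigr)^{(r-r')/r}, \qquad \alpha := \frac{(r'-1)r}{r-r'}.
\end{align*}
The first factor is finite and $u$-independent by (\ref{cond:hbound}), so everything reduces to bounding $\E_Q[Z_u^\alpha]$ uniformly in $u\in\mathcal{U}$.

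The subtle point is that formula (\ref{eq:rn}) expresses $Z_u$ through the process $W_t$, which is a Brownian motion under $P^u$ and not under $Q$. Applying Girsanov once more, $\widetilde W_t := W_t + \int_0^t u_s\, ds$ is a $Q$-Brownian motion, and substitution yields
\begin{align*}
Z_u = \exp\!\left(-\int_0^T u_t^\top d\widetilde W_t + \tfrac12\int_0^T u_t^\top u_t\, dt\right).
\end{align*}
Raising to the power $\alpha$ and factoring out the Doléans-Dade exponential of $-\alpha\int u^\top d\widetilde W$ (which is a true $Q$-martingale with expectation $1$, by Novikov and the boundedness of $u$) leaves a deterministic residual factor bounded by $\exp\bigl(\tfrac{\alpha^2+\alpha}{2} M^2 T\bigr)$. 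This provides a $u$-independent bound on $\E_Q[Z_u^\alpha]$ and thereby establishes (\ref{eq:bound}); Theorem~\ref{thrm:consistent} then delivers consistency. The main obstacle is precisely this two-step change of measure — first converting the $P^u$-moment of $|h|Z_u$ into a $Q$-moment of a reduced power of $Z_u$ so that condition (\ref{cond:hbound}) can be exploited, and then re-expressing $Z_u$ under $Q$ so that the standard Novikov bound can be applied in the correct direction.
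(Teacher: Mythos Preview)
Your proof is correct and follows the same overall strategy as the paper: verify the hypotheses of Theorem~\ref{thrm:consistent} by (i) invoking Novikov's condition for the measure equivalence and (ii) combining H\"older's inequality with a uniform moment bound on the Girsanov density obtained by factoring out a Dol\'eans--Dade exponential.

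The one genuine difference is the order of operations in step~(ii). The paper applies H\"older directly under $P^u$, splitting $\|h\,dQ/dP^u\|_r \le \|h\|_a\,\|dQ/dP^u\|_b$, and then appeals to Lemma~\ref{lemm:klbound} for the second factor; but the first factor $\|h\|_a$ is an $L^a(P^u)$-norm, which Assumption~\ref{cond:hbound} (an $L^r(Q)$ bound) does not furnish directly. You instead move one copy of $Z_u$ across to convert the $P^u$-expectation into a $Q$-expectation \emph{before} applying H\"older, so that the $h$-factor becomes exactly $\|h\|_{L^r(Q)}$ and Assumption~\ref{cond:hbound} applies verbatim. This is a cleaner route to the same destination; the residual $\E_Q[Z_u^\alpha]$ you are left with is then controlled by precisely the same Dol\'eans--Dade supermartingale argument the paper uses in Lemma~\ref{lemm:klbound}, just written under $Q$ rather than $P^u$. (A minor wording quibble: the ``residual factor'' $\exp\bigl(\tfrac{\alpha^2+\alpha}{2}\int_0^T u_t^\top u_t\,dt\bigr)$ is not literally deterministic when $u$ is adapted, only a.s.\ bounded by the deterministic constant $\exp\bigl(\tfrac{\alpha^2+\alpha}{2}M^2T\bigr)$ --- which is all you need.)
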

\begin{proof}
See Appendix~\ref{appendix:proof}. 
\end{proof}

If the adapted processes $u\in\mathcal{U}$ are given by feedback functions $u_t = u(t, X_t)$, then Condition~\ref{cond:ubound} is, for example, satisfied when 
$\mathcal{U}$ is uniformly bounded. 
Similarly, if $h$ is of the form $h(X) = \int_0^TH(X_t)dt$, or of the form $h(X) = H(X_T)$, for some function $H$, then Condition~\ref{cond:hbound} is satisfied if 
$H$ is bounded.

\section{Path integral adaptation}\label{sect:control}
 
In this section, we propose a specific adaptation step for Algorithm~\ref{algo:gamis} that can be used to sample over diffusion processes. We will adapt the proposal $P^u$ by estimating a `good' feedback function $u(t, x)$ that we can use in Eq.~(\ref{eq:sdep}). Here we interpret `good' as a function $u$ such that $P^u$ is close to an optimal proposal $P^{\star}$. For this optimal proposal to exist, we will assume for the remainder of this section that the function $h$ is strictly positive. Note that if this is not the case, one can consider $h = (h_+  + 1) - (h_- + 1)$, where $h_+(x) = \max(h(x), 0)$ and $h_-(x) = \max(-h(x), 0)$, and compute $\E_Q[h_+ + 1]$ and $\E_Q[h_- + 1]$ separately.

Since $h$ is strictly positive and $\E_Q[h(X)]<\infty$, the equation 
\begin{align}\label{eq:p*}
\frac{dP^{\star}}{dQ} = \frac{h(X)}{\E_Q[h(X)]},
\end{align}
defines a measure $P^{\star}$ that is equivalent to $Q$, which means, loosely speaking, that their densities have the same support. The measure $P^{\star}$ is the optimal proposal because IS with $P^{\star}$ gives zero variance estimates $h(X)dQ/dP^{\star} = \E_Q[h(X)]$. Note that, for the time being, this optimality is not of practical interest, since the definition of $P^{\star}$ requires $\E_Q[h(X)]$, which is what we want to evaluate in the first place. 

One might wonder whether there exists an optimal process $(u^{\star}_t)_{0\leq t\leq T}$ satisfying $P^{\star} = P^{u^{\star}}$.
This problem is actually studied in stochastic optimal control theory. In our particular scenario the answer is yes, this $u^\star$ exists, and it can be expressed in terms of a feedback function of the path $u^\star_t = u(t, X)$, provided that $\E_Q[h(X)|\log(h(X))|]<\infty$, see Proposition 3.5(ii) in \cite{bierkens}. 
This inspires us to use optimal control computations for the proposal update in the \textbf{Adaptation} step in the generic AMIS as given in Algorithm~\ref{algo:gamis}. 
Below we present a theorem from path integral control theory \cite{thijssen} that can be applied to estimate a `good' proposal-feedback-function $u(t, x)$ and so yields a path integral adaptation. We will demonstrate the use of the path integral adaptation by implementing various AMIS algorithms for an example sampling problem over a diffusion process.

\begin{thrm}\label{thrm:control}
Let $u_t$ be a control process, and let $B_t$ be a $P^u$-Brownian motion. Suppose that an optimal control $u^{\star}$ exists. Let $g_t$ be a $k$-dimensional measurable, square integrable and adapted process, then 
\begin{align}
\E_{P^u}\left[h(X)\frac{dQ}{dP^u} \int_0^T g_t {u^{\star}_t}^\top dt\right]
= \E_{P^u}\left[h(X)\frac{dQ}{dP^u} \int_0^T g_t \left(dB_t + u_tdt\right)^\top\right]. \label{eq:u*is}
\end{align}
\end{thrm}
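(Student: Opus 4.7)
The plan is to rewrite both sides of Eq.~(\ref{eq:u*is}) as expectations under the optimal proposal $P^\star$, where they will agree up to an Itô integral against a $P^\star$-Brownian motion whose expectation vanishes.

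First I would identify the quantity $dB_t + u_t\,dt$ on the right-hand side as the differential of a $Q$-Brownian motion. Girsanov's theorem, applied in the direction that produces the Radon--Nikodym derivative in Eq.~(\ref{eq:rn}), shows that $\tilde W_t := B_t + \int_0^t u_s\,ds$ is a $Q$-Brownian motion, consistent with the fact that under $Q$ the SDE~(\ref{eq:sdep}) reduces to $dX_t = \mu_t\,dt + \sigma_t\,d\tilde W_t$. Using the identity $h(X)\,(dQ/dP^u)\,dP^u = h(X)\,dQ$ together with $h(X)\,dQ = \E_Q[h(X)]\,dP^\star$ from Eq.~(\ref{eq:p*}), both sides of Eq.~(\ref{eq:u*is}) become
\begin{align*}
\text{LHS} &= \E_Q[h(X)]\,\E_{P^\star}\Bigl[\int_0^T g_t\,{u^\star_t}^\top dt\Bigr],\\
\text{RHS} &= \E_Q[h(X)]\,\E_{P^\star}\Bigl[\int_0^T g_t\,d\tilde W_t^\top\Bigr].
\end{align*}

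Second, since $P^\star = P^{u^\star}$, a further Girsanov argument (now changing from $Q$ to $P^\star$) yields that $B^\star_t := \tilde W_t - \int_0^t u^\star_s\,ds$ is a $P^\star$-Brownian motion, so $d\tilde W_t = dB^\star_t + u^\star_t\,dt$. Substituting and splitting the integral gives
\begin{align*}
\E_{P^\star}\Bigl[\int_0^T g_t\,d\tilde W_t^\top\Bigr] = \E_{P^\star}\Bigl[\int_0^T g_t\,(dB^\star_t)^\top\Bigr] + \E_{P^\star}\Bigl[\int_0^T g_t\,{u^\star_t}^\top dt\Bigr].
\end{align*}
The first term is the $P^\star$-expectation of an Itô integral of a square-integrable adapted process against the $P^\star$-Brownian motion $B^\star$, hence zero; what remains matches the expression obtained above for the LHS, yielding Eq.~(\ref{eq:u*is}).

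The main technical subtlety will be ensuring that $\int_0^T g_t\,(dB^\star_t)^\top$ is a genuine $P^\star$-martingale rather than merely a local martingale, so that its expectation really is zero. This amounts to verifying that the square-integrability of $g$ is preserved by the change of measure from $P^u$ to $P^\star$, which follows from standard Novikov-type regularity conditions implicit in the assumed existence of $u^\star$. A minor bookkeeping point is that Eq.~(\ref{eq:u*is}) is a matrix identity (since $g_t\,{u^\star_t}^\top$ is $k\times m$) that one verifies component-wise.
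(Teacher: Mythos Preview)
Your proposal is correct and follows essentially the same route as the paper: pass to the optimal measure $P^\star$, use Girsanov to identify the $P^\star$-Brownian motion $B^\star_t$ with $dB^\star_t = dB_t + u_t\,dt - u^\star_t\,dt$, and use that $\E_{P^\star}\!\bigl[\int_0^T g_t\,(dB^\star_t)^\top\bigr]=0$. The only cosmetic difference is that you detour through $Q$ (introducing $\tilde W$) before reaching $P^\star$, while the paper writes the relation $dB^\star_t + u^\star_t\,dt = dB_t + u_t\,dt$ directly; your explicit factor $\E_Q[h(X)]$ is exactly the paper's observation that $h(X)\,dQ/dP^\star$ has zero variance and can be multiplied in safely.
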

\begin{proof}
See Appendix~\ref{appendix:proof}.
\end{proof}

Next, we illustrate how Theorem~\ref{thrm:control} can be used to construct a feedback controller $\hat u$, that will in a sense approximate $u^{\star}$. The starting point is that we choose a linear parametrization of the form 
\begin{align*}
\hat u(t, x) = Ag(t, x).
\end{align*}
Here $g(t, x)\in\mathbb{R}^l$ is an $l$-dimensional basis function that should be chosen by the user in advance of running the path integral control algorithm, and $A\in\mathbb{R}^{m\times l}$ is a parameter that shall be optimized by the algorithm. The idea behind the algorithm is to approximate $u^{\star}$ by $\hat u$ by optimizing over $A$. We can do this, for given $g$ (and a given importance sampling control $u$), if we substitute $\hat u$ for $u^{\star}$ in Eq.~(\ref{eq:u*is}). This will yield a system of equations that can be solved for $A$ as follows.
\begin{align}\label{eq:a}
A^{\star} = \E_{P^u}\left[h(X)\frac{dQ}{dP^u} \int_0^T (u_tdt + dW_t)g_t^\top\right] \E_{P^u}\left[h(X)\frac{dQ}{dP^u} \int_0^T g_tg_t^\top dt\right]^{-1}. 
\end{align}

The solution $A^{\star}$ is optimal in the sense that the corresponding $P^{\hat u^{\star}}$ (where $\hat u^{\star}=A^{\star}g$) minimizes the Kullback-Leibler divergence between $P^{\star}$ and $P^{\hat u}$, i.e. $\operatorname{D_{KL}}(P^{\star}\| P^{\hat u})$, over the class of proposal feedback functions with the given parametrization $\{\hat u = Ag\mid A\in \R^{m\times l}\}$, see \cite{ruiz, boer}. 

The smallest possible divergence $\operatorname{D_{KL}}(P^{\star}\| P^{\hat u})$ will depend on the function $g(t, x)\in\mathbb{R}^l$. Generally, complex $g$, i.e.~with $l$ large, yield more expressive power. In practice, however, there is a trade-off, since it is difficult to obtain good estimates of Eq.~(\ref{eq:a}), when $l$ is large. From a more practical point of view, it should be noted that the scenario where the algorithm is applied might put constraints on $g$. Whether or not that is the case, it is clear that the choice of the function $g$ is very important, because it determines what kind of controller you will create. For example, two types of controllers, which have perhaps been applied the most, are (1) the open-loop feed forward controller, and (2) a linear feedback controller. The (time constant) open-loop controller can be realized with $g = 1$, and the linear feedback controller with $g = (1, x)$. Note that time dependence can be introduced by using piecewise time-constant controls, i.e.~with functions of the form $g(t, x) = \sum_i g(x)\mathbb{1}_{t \in \Delta_i}$, where the $\Delta_i$ are small time intervals that cover $[t_0, t_1]$.

Eq.~(\ref{eq:a}) can be used for the proposal update in the \textbf{Adaptation} step in the generic AMIS as given in Algorithm~\ref{algo:gamis}. In Section~\ref{sect:exmp} we use this adaptation step in order to implement various AMIS algorithms for diffusion processes. A detailed description of this adaptation step is given in Appendix~\ref{appendix:algo}.

\section{Numerical example}\label{sect:exmp}

In this Section we provide a numerical example in which we compare discarding-AMIS with balance-AMIS. In both cases the adaptation step will be implemented as proposed in Section~\ref{sect:control}. 

We compare the various re-weighting schemes of AMIS in terms of Effective Sample Size (ESS). In the literature \cite{cornuet, 1511.03095} this is often defined by $N/\left(1 + \Var_P\left[\frac{dQ}{dP}\right]\right)$. However, since our goal is to minimize the variance of $h\frac{dQ}{dP}$, we instead consider 
\begin{align*}
\operatorname{ESS}^{P} = \frac{N}{1 + \Var_P\left[h\tfrac{dQ}{dP}\right]\psi^{-2}} 
= \frac{N}{1 + N\Var_P\left[\hat\psi^{P}\right]\psi^{-2}},
\end{align*}
where the second equality follows from $\Var_P[h\frac{dQ}{dP}] = N\Var_P[\hat\psi^P] $,which is a consequence of the definition in Eq.~(\ref{eq:is}).
We remark that this ESS is used in \cite{thijssen} in a setting with diffusion processes, and there it is shown that proposals $P^u$ with $u$ close to $u^\star$ have a close to optimal ESS, and vise versa. 
Similar to \cite{1511.03095}
we generalize the ESS for MIS:
\begin{align*}
\operatorname{ESS}^{\operatorname{MIS}} = \frac{N}{1 + N\Var_P\left[\hat\psi^{\operatorname{MIS}}\right]\psi^{-2}},
\end{align*}
which can be evaluated approximately with the following sample estimate
\begin{align}\label{eq:sample_ess}
\widehat{\operatorname{ESS}} 
	&= \frac{\left(\sum_{nk} y_{nk}\right)^2}{\sum_{nk} \left(y_{nk}\right)^2},
\end{align}
where $y_{nk} = h(X_n^k)\tfrac{dQ}{dP_k}(X_n^k)w_k(X_n^k)$, with $X_n^k\sim P_k$. The estimator $\widehat{\operatorname{ESS}}$ takes values between $1$ (when all but one $y_{nk}$ are zero) and $N$ (when all $y_{nk}$ are equal, which happens with positive probability iff $P = P^{\star}$). 

In the following example we will describe a sampling problem that will be used to compare discarding-AMIS with balance-AMIS.

\begin{exmp}\label{exmp:bench}
This example is similar to Example~\ref{exmp:counter}, where we interpret $X$ as a diffusion process, and generalize to $\R^d$. The target measure $Q$ is implicitly given by an $d$-dimensional standard Brownian motion. This is the process $(X_t)_{0\leq t\leq 1}$ as given in Eq.~(\ref{eq:sdeq}) with $X_0 = 0\in\mathbb{R}^d$ and a constant drift and diffusion equal to $\mu_t = 0, \sigma_t = 1 \in\mathbb{R}^d$. The target function is a Gaussian function centered around a target point $z\in\mathbb{R}^d$:
$$h((X)_{0\leq t\leq 1}) = \exp\left(-\tfrac12 (X_1 - z)^\top (X_1 - z)\right).$$
\end{exmp}
For importance sampling we will consider two different classes of proposals $P^u$, corresponding with two different parameterizations of $u$ that are of the linear form $u=Ag(t, x)$ as detailed in Section~\ref{sect:control}. 

The first case that we consider is $g = 1\in\R$. The class of proposals that corresponds to $g = 1$ is in a sense the same class as we used in Example~\ref{exmp:counter}. It is a degenerate case of a diffusion process: since the control $u(t, x) = A\in\R^d$ is constant, all states, except the end state $X_{1}$, of the entire path $(X_t)_{0\leq t\leq 1}$ can be ignored, because $h$ is only a function of $X_1$. 

The second class is constructed with $g = (1, x) \in\R^{1 + d}$. The corresponding $u(t, x) = Ag$ with $A\in\R^{(d + 1)\times d}$ are linear feedback controls, making the intermediate states $X_t$ with $t<1$ relevant to the distribution of $X_1$. This more complex parametrization will give us more control over the process, and hence more flexibility in finding a good proposal. So although we need to learn a parameter $A$ with a higher dimension, we expect a higher ESS. 
\\

We use Example~\ref{exmp:bench} with $d = 3$, and $z = 2\cdot\mathds{1}\in\mathbb{R}^3$ (where $\mathds{1}\in\R^3$ is the vector with all ones) in order to compare four types of AMIS algorithms. 
All four methods will be implemented with the Path Integral Adaptation that is described in Appendix~\ref{appendix:algo}, using a parametrization based on $g = 1$. The difference between the four methods comes from the following different re-weighting schemes that they use:
\begin{itemize}
\item Balance re-weighting, with $N_k = 1$ sample per iteration. 
\item Optimized discarding time, i.e.~flat re-weighting with $t_k$ that maximizes $\widehat{\operatorname{ESS}}$ and $N_k = 1$ sample per iteration.
\item Flat re-weighting with discarding time $t_k = \lceil k/2\rceil$ and $N_k = 1$ sample per iteration. 
\item An iterative non-mixing scheme, with constant batch sizes $N_k$, where only the samples of the last iteration are used, i.e.~with $w_K\propto 1$ and $w_k = 0$ for $k<K$.
\end{itemize}
\begin{figure}
\begin{center}
\includegraphics[width=10cm]{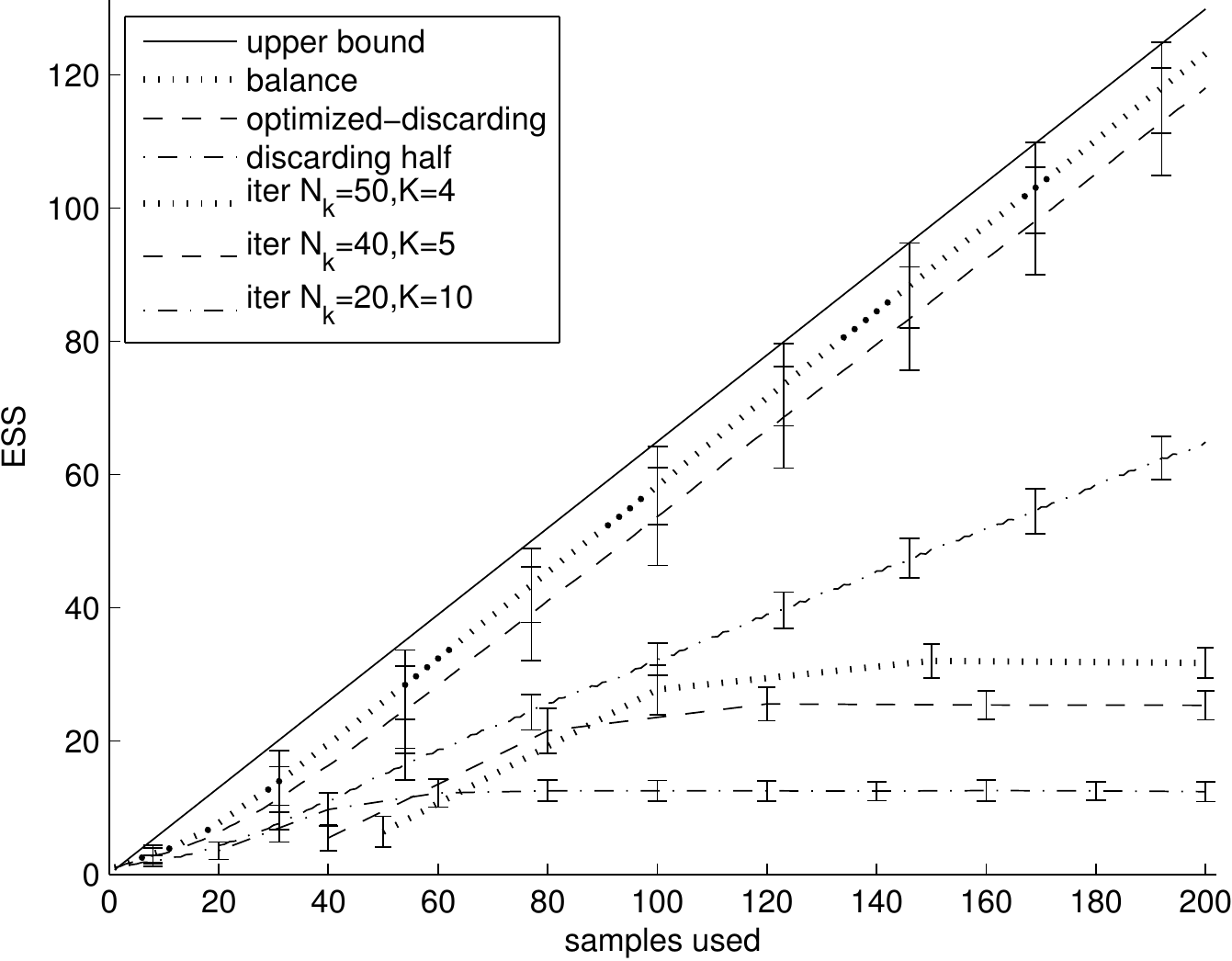}
\end{center}
\caption{
Average $\widehat{\operatorname{ESS}}$ of AMIS for various re-weighting schemes with adaptation based on $g = 1$, taken over 100 independent runs. On the $x$-axis are the number of samples that are used so far in the AMIS estimate, i.e., $\sum_{i = 1}^k N_i$ for $k = 1, \ldots, K$. 
}
\label{fig:ess}
\end{figure}

We report how the $\widehat{\operatorname{ESS}}$, averaged over $100$ runs, increased with the number of samples that was used by each of the AMIS algorithms, see Figure~\ref{fig:ess}. The upper bound is the optimal achievable ESS within the class of proposals corresponding to $g  = 1$. The slope of this upper bound is $\max\{\operatorname{ESS}^{P^u}\colon u = A,\ A\in\R^d\} = (3/4)^d$. The two methods that perform the best are balance-AMIS and optimized-discarding-AMIS. These two methods make optimal use of new samples when the underlying parameter $A$ has converged to it's optimal value, as can be seen by the slope that matches the upper bound. When discarding half of the samples, the slope in average $\widehat{\operatorname{ESS}}$ is halved as well. The iterative non-mixing schemes perform the worst: the average $\widehat{\operatorname{ESS}}$ never uses more than the $N_k$ samples of one batch in the estimate, and therefore $\widehat{\operatorname{ESS}}$ does not increase with $k$, or equivalently with the total number of samples used. 

A fifth method that we also tested uses flat re-weighting without discarding. Although this method seemed to perform reasonably on average, this result is not significant because of very large error bars. For these reasons this method is not included in Figure~\ref{fig:ess}. 
\\

\begin{figure}
\begin{center}
\includegraphics[width=10cm]{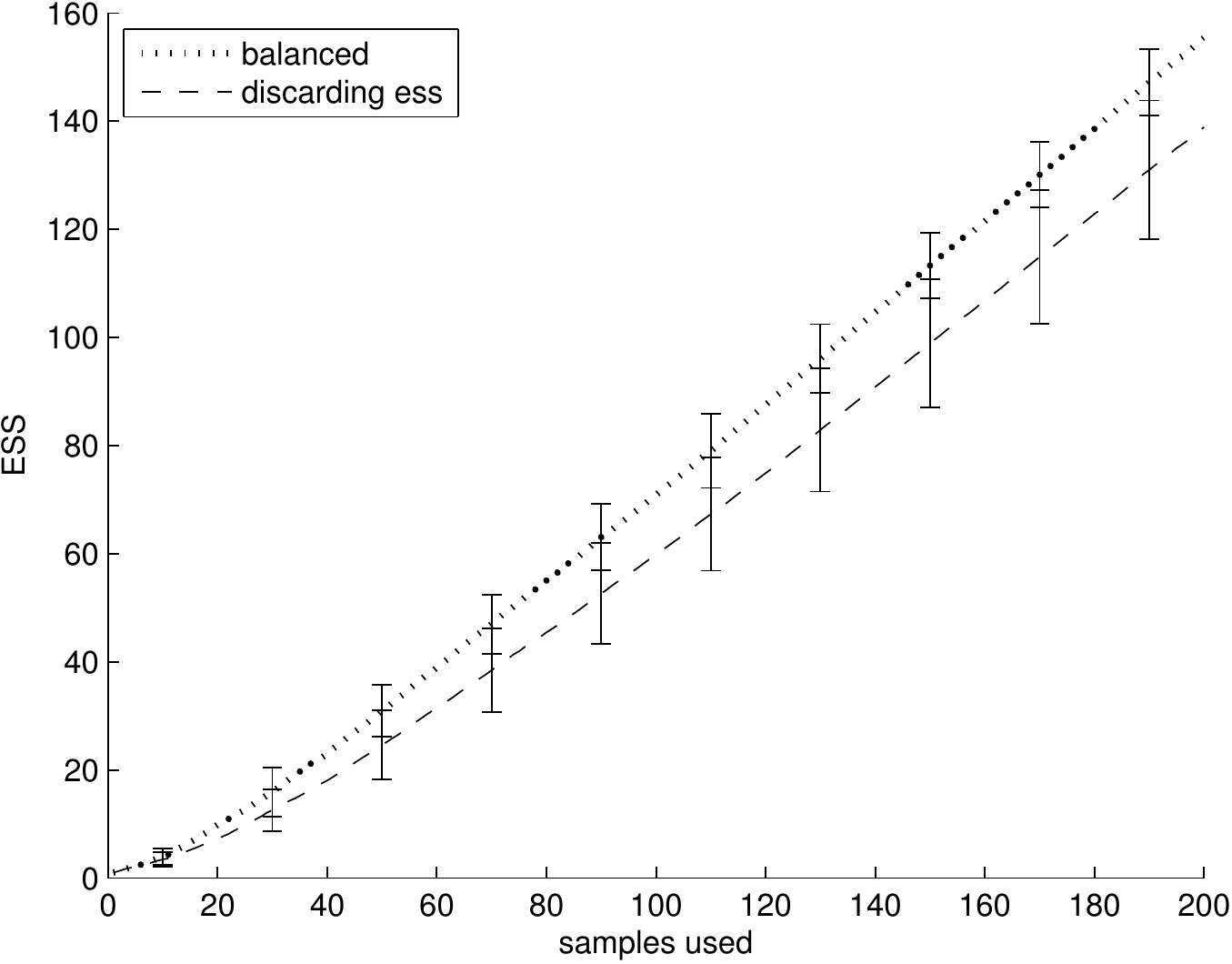}
\end{center}
\caption{
Average $\widehat{\operatorname{ESS}}$ of balance-AMIS and optimized-discarding-AMIS with adaptation based on $g = (x, 1)$, taken over 100 independent runs. On the $x$-axis are the number of samples that are used so far in the AMIS estimate, i.e., $\sum_{i = 1}^k N_i$ for $k = 1, \ldots, K$. 
}
\label{fig:essfeed}
\end{figure}
\begin{table}
\caption{Computation time in seconds for $100$ runs of AMIS based on $g = (1, x)$ for a fixed number $N = \sum_{k = 1}^K N_k = 200$ of samples, but with different numbers of iterations $K$.  \label{tab:time}}
\begin{center}
\begin{tabular}{r|cccccc}
$K$ 					& 10 	& 25 	& 50 	& 100 	& 200 	   \\\hline
balance 				& 46	& 104 	& 200 	& 392 	& 780   \\
optimized-discarding	& 5.9 	& 6.2	& 6.4 	& 6.9 	& 7.9   
\end{tabular}
\end{center}
\end{table}

We make the same comparison between optimized-discarding-AMIS with balance-AMIS while using the more complex parametrization with $g = (1, x)$. The results are reported in Figure~\ref{fig:essfeed}. Compared to the experiment with $g = 1$ we notice that the average $\widehat{\operatorname{ESS}}$ is higher, and that the two methods perform similar; balance-AMIS is slightly better. However, balance-AMIS also required a lot more computational resources to produce these results, see Table~\ref{tab:time}. From the table it appears that computation time for balance-AMIS is roughly proportional to the number of iterations. This is exactly what one would expect based on the complexity $\mathcal{O}(K^2M) = \mathcal{O}(KN)$ of the adaptation step, when the total number of samples $N=200$ is fixed. This complexity can be avoided in the adaptation step for optimized-discarding, because with that re-weighting scheme the weights are not changed in future iterations. We conclude that for a given number of samples optimized-discarding-AMIS performs almost as well as balance-AMIS, but at a fraction of the computational cost.

\appendix

\section{Proofs and definitions}\label{appendix:proof}

\begin{deff}\label{deff:mds}
Let $\{X_n\}_{n> 0}$ be a sequence of random variables that is adapted to the filtration $\{\mathcal{F}_n\}_{n> 0}$. Then $X$ is called a Martingale Difference Sequence w.r.t.~$\mathcal{F}$ when for all $n> 0$
\begin{enumerate}
\item $\E[X_n] < \infty$,
\item $\E[X_{n + 1}\mid\mathcal{F}_{n}] = 0$.
\end{enumerate}
\end{deff}

\begin{thrm}[Generalized Strong Law of Large Numbers]\label{thrm:slln} 
Let $\{Y_n\}_{n> 0}$ be a Martingale Difference Sequence relative to $\{\mathcal{F}_n\}_{n> 0}$. If $\{Y_n\}_{n> 0}$ is uniformly bounded in the $L^r$-norm for some $r > 1$, then 
\begin{align*}
\frac1N \sum_{n = 1}^N Y_n \to 0\text{ almost surely, as }N \to \infty. 
\end{align*}
\end{thrm}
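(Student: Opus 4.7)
The plan is to prove the series $\sum_{n\geq 1} Y_n/n$ converges almost surely, and then extract the Cesàro statement $N^{-1}\sum_{n=1}^N Y_n\to 0$ by Kronecker's lemma. Before starting, I would reduce to the case $1<r\leq 2$: on a probability space $L^r\subset L^2$ for $r\geq 2$, so $\|Y_n\|_2\leq \|Y_n\|_r\leq C$, and we may replace $r$ by $2$. From here on, assume $1<r\leq 2$.

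Next, I would look at the normalized partial sums $M_N=\sum_{n=1}^N Y_n/n$. Because $\{Y_n\}$ is a martingale difference sequence with respect to $\{\mathcal{F}_n\}$ and the coefficients $1/n$ are deterministic, $\{M_N\}$ is itself an $\{\mathcal{F}_N\}$-martingale. The key estimate is an $L^r$ bound uniform in $N$. I would invoke the von Bahr–Esseen inequality, which states that for a martingale difference sequence $Z_1,\ldots,Z_N$ and $1\leq r\leq 2$,
\begin{align*}
\E\!\left[\left|\sum_{n=1}^N Z_n\right|^r\right]\leq 2\sum_{n=1}^N \E[|Z_n|^r].
\end{align*}
Applied to $Z_n=Y_n/n$ and using $\E[|Y_n|^r]\leq C^r$, this gives $\E[|M_N|^r]\leq 2C^r\sum_{n=1}^N n^{-r}$, which is bounded uniformly in $N$ because $r>1$ makes $\sum_n n^{-r}$ summable. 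Hence $\{M_N\}$ is an $L^r$-bounded martingale with $r>1$, and Doob's martingale convergence theorem yields that $M_N$ converges almost surely (and in $L^r$) to a finite limit $M_\infty$.

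Finally, I would apply Kronecker's lemma: if a numerical series $\sum_n a_n$ converges and $b_n\uparrow\infty$, then $b_N^{-1}\sum_{n=1}^N b_n a_n\to 0$. Setting $a_n=Y_n(\omega)/n$ and $b_n=n$ on the almost-sure event where $\sum_n Y_n/n$ converges, we obtain $N^{-1}\sum_{n=1}^N Y_n\to 0$ a.s., which is the claim.

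The main obstacle, and the only step that is not immediate, is the $L^r$ bound on the martingale $M_N$ when $1<r<2$: the case $r=2$ follows from the orthogonality of martingale differences and is a one-line calculation, but for $r<2$ orthogonality is unavailable and one must appeal to a genuine martingale moment inequality such as von Bahr–Esseen (or, equivalently, a Burkholder-type inequality). Once this ingredient is accepted, the rest of the argument is a standard pairing of Doob's convergence theorem with Kronecker's lemma.
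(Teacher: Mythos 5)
Your argument is correct, but it takes a genuinely different route from the paper, which simply defers to the strong law for $L^r$-mixingales in Hansen's work (martingale difference sequences being a special case of mixingales) and gives no self-contained argument. Your chain --- reduce to $1<r\leq 2$ by Lyapunov's inequality, form the martingale $M_N=\sum_{n\leq N}Y_n/n$, bound it in $L^r$ via the von Bahr--Esseen inequality, conclude a.s.\ convergence of $M_N$ by Doob, and finish with Kronecker's lemma --- is the classical Chow-type proof and is complete as written; the one ingredient that needs care, as you note, is that von Bahr--Esseen applies here because the martingale difference property $\E[Z_{n+1}\mid\mathcal{F}_n]=0$ implies the conditional centering on partial sums required in its original formulation (equivalently, one could substitute a Burkholder--Davis--Gundy estimate on the square function). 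What the paper's citation buys is generality: the mixingale version tolerates weaker, asymptotically vanishing dependence rather than exact conditional centering, which is more than is needed here. What your proof buys is transparency and self-containment: every step is elementary given standard martingale theory, and it makes explicit why the hypothesis $r>1$ is essential (it is exactly what makes $\sum_n n^{-r}$ converge), which connects nicely to the paper's own remark that the theorem fails at $r=1$.
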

\begin{proof}
This is proven in \cite{hansen} for Mixingale Sequences, which are more general then Martingale Difference Sequences. 
\end{proof}
\begin{rmrk}
The theorem above does not generalize to the case $r=1$. However, for $r =1$ there is a weak law (convergence in probability) when the set $\{Y_n\}_{n> 0}$ is uniformly integrable, see \cite{andrews}. 
\end{rmrk}
\begin{lemm}\label{lemm:klbound}
Let $\mathcal{U}$ be a set of adapted processes. Suppose that $\mathcal{U}$ is uniformly bounded in the $L^\infty$ norm, i.e. there is a constant $C$ such that for all $u\in\mathcal{U}$, 
$$\|u\|_\infty = \inf\left\{A\geq 0 : |u_t| < A \text{ for all } t, P^u\text{-almost surely}\right\} < C.$$
Then $dQ/dP^u$ is bounded uniformly in the $L^r$-norm for all $r\geq 1$. 
\end{lemm}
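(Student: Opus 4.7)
The plan is to control $\|dQ/dP^u\|_r$ computed with $X\sim P^u$ by explicitly evaluating the $r$-th moment of the Girsanov density. Let $B_t$ denote a $P^u$-Brownian motion, so that by Eq.~(\ref{eq:rn}) the density can be written
\[
\frac{dQ}{dP^u}=\exp\!\left(-\int_0^T u_t^\top dB_t-\tfrac12\int_0^T |u_t|^2\,dt\right).
\]
Raising to the $r$-th power and inserting the compensator needed to form an exponential martingale, I would rewrite
\[
\left(\frac{dQ}{dP^u}\right)^{\!r}=M_T\cdot\exp\!\left(\tfrac{r(r-1)}{2}\int_0^T |u_t|^2\,dt\right),
\]
where $M_T=\exp\!\left(-r\int_0^T u_t^\top dB_t-\tfrac{r^2}{2}\int_0^T |u_t|^2\,dt\right)$.

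Next I would verify that $(M_t)_{0\le t\le T}$ is a genuine $P^u$-martingale and not merely a local martingale. Since $\|u\|_\infty< C$ uniformly in $\mathcal{U}$, the integrand $r u_t$ is bounded, which immediately gives Novikov's condition $\E_{P^u}\exp(\tfrac12\int_0^T |r u_t|^2 dt)\le \exp(\tfrac{r^2 C^2 T}{2})<\infty$. Consequently $\E_{P^u}[M_T]=M_0=1$.

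Using the uniform bound $\int_0^T |u_t|^2\,dt\le C^2 T$ that follows from $u\in\mathcal{U}$, the deterministic factor is pointwise bounded by $\exp\!\left(\tfrac{r(r-1)}{2}C^2 T\right)$. Taking $P^u$-expectations of the factorisation above yields
\[
\E_{P^u}\!\left[\left(\tfrac{dQ}{dP^u}\right)^{\!r}\right]\le \exp\!\left(\tfrac{r(r-1)}{2}C^2 T\right)\cdot \E_{P^u}[M_T]=\exp\!\left(\tfrac{r(r-1)}{2}C^2 T\right),
\]
so $\|dQ/dP^u\|_r\le\exp\!\left(\tfrac{r-1}{2}C^2 T\right)$, a constant depending only on $r$, $C$, and $T$, and in particular independent of $u\in\mathcal{U}$. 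The main (minor) obstacle is just the bookkeeping in passing from the $Q$-Brownian motion in the Girsanov formula to the $P^u$-Brownian motion under which the expectation is taken, and recognising the correct exponential martingale to factor out; once that is done, boundedness of $u$ does all the work, both for Novikov and for the residual $\int |u|^2 dt$ factor.
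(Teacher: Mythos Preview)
Your proof is correct and rests on the same underlying idea as the paper's---extract an exponential (super)martingale from $(dQ/dP^u)^r$ and bound the residual $\int_0^T|u_t|^2\,dt$ term using $\|u\|_\infty<C$---but the execution differs. You factor $(dQ/dP^u)^r$ directly as $M_T$ times a bounded term and invoke Novikov to obtain $\E_{P^u}[M_T]=1$. The paper instead applies H\"older's inequality with exponents $a>r$ and $b$ satisfying $1/r=1/a+1/b$, and then uses only that the Dol\'eans exponential $\mathcal{E}(-a\int u^\top dW)$ is a positive local martingale and hence a supermartingale, so its expectation is at most $1$; Novikov is never invoked. Your route is more direct and yields the sharper constant $\exp\!\big(\tfrac{r-1}{2}C^2T\big)$, which is precisely the limit $a\downarrow r$ of the paper's bound $\exp\!\big(\tfrac{a-1}{2}C^2T\big)$; the paper's route has the mild conceptual advantage that it never needs to verify that the exponential local martingale is a true martingale. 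One small quibble: the factor $\exp\!\big(\tfrac{r(r-1)}{2}\int_0^T|u_t|^2\,dt\big)$ is not literally deterministic, since $u$ is only adapted, but your pointwise bound by $\exp\!\big(\tfrac{r(r-1)}{2}C^2T\big)$ is valid almost surely and is all that the argument requires.
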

\begin{proof}
Let $r\geq 1$ and $u\in\mathcal{U}$. Choose $a$ such that $a>r$. 
\begin{align*}
\|dQ/dP^u\|_r 
	& = \left\|\exp\left(-\tint_0^Tu_t^{\top}dW_t - \tfrac12\tint_0^Tu_t^{\top}u_tdt\right)\right\|_{r} \\
	& = \left\|\exp\left(-\tint_0^Tu_t^{\top}dW_t - \tfrac{a}{2}\tint_0^Tu_t^{\top}u_tdt\right) \exp\left( - \tfrac{1 - a}{2}\tint_0^Tu_t^{\top}u_tdt\right)\right\|_{r}.
\end{align*}
Let $b$ be such that $\frac1r = \frac1a + \frac1b$. Then $b>1$, and by H\"olders Inequality and the bound on $u$ we obtain respectively
\begin{align*}
\|dQ/dP^u\|_r 
	&\leq\left\|\exp\left(-\tint_0^Tu_t^{\top}dW_t - \tfrac{a}{2}\tint_0^Tu_t^{\top}u_tdt\right)\right\|_a\left\|\exp\left(\tfrac{a - 1}{2}\tint_0^Tu_t^{\top}u_tdt\right)\right\|_{b} \\
	&<\left\|\mathcal{E}\left(-a\tint_0^Tu_t^{\top}dW_t\right)\right\|_1^{1/a} \exp\left((a - 1)C^2T/2\right). 
\end{align*}
The Dol\'ean exponential $\mathcal{E}\left(-a\int_0^Tu_t^{\top}dW_t\right)$ is a local martingale that is positive. Hence, it is a super martingale, so that
\begin{align*}
\|dQ/dP^u\|_r 
	&< \exp\left((a - 1)C^2T/2\right). \qedhere
\end{align*}
\end{proof}
\begin{proof}[Proof of Theorem~\ref{thrm:consistent_u}]
Note that Assumption 1 of the theorem implies Novikov's Condition, so that $P^u\sim Q$ for all $u\in\mathcal{U}$. So in particular we have $P^u\ll Q$ and and therefore the condition of Eq.~(\ref{eq:p<<qh}) holds. 

Now we will show that condition Eq.~(\ref{eq:bound}) also holds, so that consistency follows from Theorem~\ref{thrm:consistent}. Let $r>1$, and choose $a, b>1$ such that $\frac1r = \frac1a +\frac1b$. Then, using H\"olders inequality, we get
\begin{align*}
\left\|h\tfrac{dQ}{dP}\right\|_r
	& \leq \|h\|_{a} \left\|\tfrac{dQ}{dP}\right\|_{b}.
\end{align*}
Now, $\|h\|_{a}$ is bounded by Assumption~2 of the theorem, and $\left\|\tfrac{dQ}{dP}\right\|_{b}$ is bounded by Lemma~\ref{lemm:klbound}. 
\end{proof}

\begin{proof}[Proof of Theorem~\ref{thrm:control}]
Because $u^{\star}$ is an optimal control we have $P^{\star} = P^{u^{\star}}$. Furthermore, the equation 
$$dB^{\star}_t + u^{\star}_tdt = dB_t + u_tdt,$$ 
defines a $P^{\star}$-Brownian motion $B^{\star}_t$. If we multiply this equation with $g$, integrate, and take the expected value w.r.t.~$P^{\star}$, we obtain
\begin{align*}
\E_{P^{\star}}\left[\int_{0}^{T} g_t {u^{\star}_t}^\top dt\right] = \E_{P^{\star}}\left[\int_{0}^{T} g_t (u_t dt + dB_t)^\top\right].
\end{align*}
On the left hand side the $\int g_tdB^{\star}$ term vanished because it is a Martingale w.r.t.~$P^{\star}$ because $g$ is square integrable. 
The variable $h(X)dQ/dP^{\star}$, where $X\sim P^{\star}$, has zero variance, so it is safe to multiply the equation above with it
\begin{align*}
\E_{P^{\star}}\left[h(X)\frac{dQ}{dP^{\star}}\int_{0}^{T} g_t {u^{\star}_t}^\top dt\right] = \E_{P^{\star}}\left[h(X)\frac{dQ}{dP^{\star}}\int_{0}^{T} g_t (u_t dt + dB_t)^\top\right].
\end{align*}
Changing the measure from $P^{\star}$ to $P^u$ we obtain
\begin{gather*}
\E_{P^u}\left[h(X)\frac{dQ}{dP^{u}}\int_{0}^{T} g_t {u^{\star}_t}^\top dt\right] = \E_{P^u}\left[h(X)\frac{dQ}{dP^{u}}\int_{0}^{T} g_t (u_t dt + dB_t)^\top\right]. \qedhere
\end{gather*}
\end{proof}

\section{Path integral AMIS}\label{appendix:algo}

In this section we give a detailed description of the \index{path integral control algorithm}\textit{path integral control algorithm}. This algorithm computes MC estimates of expected values over a diffusion process. Therefore the algorithm can be used to solve a sampling problem, as described in Section~\ref{sect:wiener}.
In order to compute the MC estimates efficiently we will use adaptive multiple importance sampling (AMIS). Therefore the overall structure of the path integral control algorithm is very similar to Algorithm~\ref{algo:gamis} in Section~\ref{sect:gen-amis}. 

\begin{algorithm}
\caption{Path Integral Control Algorithm with AMIS}
\label{algo:pic}
\begin{description}
\item[Iterate.]
For $k = 1, \ldots, K $ do
	\begin{description}
	\item[Path Integral Adaptation.] \index{Path Integral Adaptation}
		Construct a feedback control $u_k(t, x)$, possibly depending on $X_n^l$ and $w_n^l$ with $1\leq l < k$ and $1\leq n\leq N_l$.
	\item[Generation.] 
		For $n = 1, \ldots, N_k$ draw sample paths $X_n^k\sim P_{k}$, where $P_k = P^{u_k}$,\\
		and compute the cost $S_n^k$ of the path.
	\item[Re-weighting.] \index{re-weighting}
		For $n = 1, \ldots, N_k$, construct $w_n^k$.\\
		For $l = 1, \ldots, k- 1$ and $n = 1, \ldots, N_l$, update $w_n^l$.
	\item[Intermediate Output.] Return $\hat{J}^{\star}_k = -\log\frac1{\sum_{l= 1}^kN_l}\sum_{l = 1}^k\sum_{n = 1}^{N_l} e^{-S_n^l}w_n^l$, \\
	and return $u_k(t, x)$. 
	\end{description}
\item[Output.] Return the optimal control estimate  $u_{k+1}(t, x)$ that would be computed by the Adaptation at iteration $K + 1$. 
\end{description}
\end{algorithm}

The path integral control algorithm runs $k = 1, \ldots, K$ iterations, and at each iteration four steps are executed. We shall give a description of each of these four steps. 

The first step is the \textbf{Path Integral Adaptation}, that implements the control computations for importance sampling. This step contains the core of path integral control. It is explained in more detail below. 

The second step is \textbf{Generation}. Here $N_k$ samples paths are drawn from the process that is given by a SDE of the form
\begin{align}
dX^u_t &= b\left(t, X_t^u\right)dt + \sigma\left(t, X_t^u\right) \left[\left(u(t, X_t^u\right)dt + dW_t\right], \label{eq:dyn} 
\end{align}
with importance control $u(t, x) = u_k(t, x)$. Note that this is a special case of Eq~\eqref{eq:sdep}. 
The cost of each path is calculated using 
\begin{align}
S^u_{t} &= Q\left(X^u_{t_1}\right) + \int_{t}^{t_1}\! R\left(\tau, X_\tau^u\right) + \frac12u\left(\tau, X_\tau^u\right)^\top\!\! u\left(\tau, X_\tau^u\right) d\tau + \int_{t}^{t_1}\! u\left(\tau, X_\tau^u\right)^\top\!\! dW_\tau.\label{eq:s}		
\end{align}
Note that the cost is the negative log of the importance weight as given in Eq.~\eqref{eq:rn}. 
Approximate samples from a diffusion process can for example be obtained with the Euler-Maruyama method, or similar higher order schemes, see \cite{kloeden}. 

The third step is the computation of the \textbf{Re-weighting}. This step is discussed in detail in Sections~\ref{sect:flat} and \ref{sect:discard}. 

The fourth and last step is the \textbf{Intermediate Output}, where we return two intermediate results: the estimate $\hat J$ of $-\log \mathbb{E}[e^{-S}]$, and the feedback control that was constructed in the first step.

\begin{algorithm}
\caption{Path Integral Adaptation (at iteration $k$)}
\label{algo:pia}
	\begin{itemize}	
	\item If $k = 1$ (initialization), set:\\
	$A_k = 0\in\mathbb{R}^{m\times l}$, \\
	$F_k = 0\in\mathbb{R}^{m\times l}$, \\
	$G_k = 0\in\mathbb{R}^{l\times l}$. \\
	\item Else, if $k > 1$,\\
	For $n = 1, \ldots, N_{k - 1}$, set
	\begin{align*}
	g_t^{n, k - 1} 
		&= g\left(t, \left(X_n^{k - 1}\right)_t\right),\\
	u_t^{n, k - 1} 
		&= A_lg_t^{n, k - 1},\\
	R_t^{n, k - 1}
		&= R\left(t, \left(X_n^{k -1 }\right)_t\right)\\
	S_n^{k - 1} 
		&= Q\left(X_{t_1}\right) + \int_{t_0}^{t_1} {u_t^{n, k - 1}}^\top \left(dW_n^{k - 1}\right)_t + \frac12 {u_t^{n, k - 1}}^\top u_t^{n, k - 1} dt + R_t^{n, k - 1}dt.
	\end{align*}
	And set
	\begin{align*}
	G_k 
		&= \sum_{l = 1}^{k - 1}\sum_{n = 1}^{N_l} h\left(X_n^{l}\right) e^{-S_n^l} w_n^{l} \int_{t_0}^{t_1} g_t^{n, l}{g_t^{n, l}}^\top dt,\\
	F_k 
		&= \sum_{l = 1}^{k - 1}\sum_{n = 1}^{N_l} h\left(X_n^{l}\right) e^{-S_n^l} w_n^{l} \int_{t_0}^{t_1} \left(u_t^{n, l}dt  + \left(dW_n^{l}\right)_t\right){g_t^{n, l}}^\top dt,\\
	A_k 
		&= F_kG_k^{-1}.
	\end{align*}
	Let $u_k(t, x) = A_kg(t, x)$.\\
	Let $P_k$ be the measure induced by Eq.~(\ref{eq:dyn}) with $u = u_k$.  
	\end{itemize}	
\end{algorithm}

Next, we give a more detailed description of the \textbf{Path Integral Adaptation} step (see Algorithm~\ref{algo:pia}) and it's interaction with Algorithm~\ref{algo:pic}. The path integral adaptation is the core of Algorithm~\ref{algo:pic}: here the samples are combined adaptively to construct new controls $u_k$ that induce new proposal distributions $P_k$. The adaptation  requires a user specified parametrization of the control $u(t, x) = Ag(t, x)$, for a given function $g(t, x)$. The main task of Algorithm~\ref{algo:pia} is to compute a AMIS MC estimate $A_k$ that is subsequently used to define the next control. This procedure is repeated sequentially as Algorithm~\ref{algo:pic} runs trough the $K$ iterations. 
At the $k$-th iteration $N_k$ samples are drawn from the proposal distribution $P_k = P^{u_k}$, where $u_k(t, x) = A_kg(t, x)$. The parameters $A_k$ are computed with samples from previous iterations, and are an AMIS estimate of the optimal $A^\star$. The computation of the estimate $A_k$ is based on Eq.~(\ref{eq:a}), that is equivalent to the three equations below. 
\begin{align*}
A^{\star} &= F^{\star}{G^{\star}}^{-1}, \\
F^{\star} &= \E_{P^u}\left[h(X)\frac{dQ}{dP^u} \int_{t_0}^{t_1} (u_tdt + dW_t)g_t^\top\right],\\
G^{\star} &= \E_{P^u}\left[h(X)\frac{dQ}{dP^u} \int_{t_0}^{t_1} g_tg_t^\top dt\right]. 
\end{align*}
At each iteration $k$ the terms $F^{\star}$ and $G^{\star}$ will be estimated by $F_k$ and $G_k$ respectively. 

Note that the path weights $e^{-S_n^l}$ are also required in the \textbf{Intermediate Output} step of Algorithm~\ref{algo:pic}. The time integrals above could, for example, be estimated approximately with the Euler-Maruyama method. We remark that the algorithm above is of order $\mathcal{O}(MK^2)$ when $N_k = M$ for all $k$, because it is given for a generic re-weighting scheme. When flat- or discarding-re-weighting is used, $G_k$ and $F_k$ might be computed incrementally because the re-weighting does not change once set, dropping the first sum, so that the algorithm becomes $\mathcal{O}(MK)$.

\bibliographystyle{alpha}
\bibliography{diffusion_amis_new}

\end{document}